\newtheorem{thm}{Theorem}
\newtheorem{lemma}{Lemma}
\newtheorem{cor}{Corollary}
\newtheorem{prop}{Proposition}
\theoremstyle{definition}
\theoremstyle{remark}
\newtheorem*{rem*}{Remark}
\newtheorem{rem}{Remark}
\newcommand{\mr}{{\mathbb R}}
\newcommand{\mn}{{\mathbb N}}
\newcommand{\mc}{{\mathbb C}}
\newcommand{\md}{{\mathbb D}}
\newcommand{\mt}{{\mathbb T}}
\renewcommand{\rho}{\varrho}
\newcommand{\eps}{\varepsilon}
\renewcommand{\Im}{\operatorname{Im}}
\renewcommand{\Re}{\operatorname{Re}}
\newcommand{\supp}{\operatorname{supp}}
\newcommand{\dist}{\operatorname{dist}}
\newcommand{\ran}{\operatorname{Ran}}
\newcommand{\hil}{\mathcal{H}}
\newcommand{\bdd}{\mathcal{B}}
\begin{document}

\title[Variation of discrete spectra for n.s.a perturbations]{Variation of discrete spectra for non-selfadjoint perturbations of selfadjoint operators}

\begin{abstract}
Let $B=A+K$ where $A$ is a bounded selfadjoint operator and $K$ is an element of the von Neumann-Schatten ideal $\mathcal{S}_p$ with $p>1$. Let $\{\lambda_n\}$ denote an enumeration of the discrete spectrum of $B$. We show that $\sum_n \dist(\lambda_n, \sigma(A))^p$ is bounded from above by a constant multiple of $\|K\|_p^p$. We also derive a unitary analog of this estimate and apply it to obtain new estimates on zero-sets of Cauchy transforms.

\end{abstract}

\author[M. Hansmann]{Marcel Hansmann}
\address{Faculty of Mathematics\\
Chemnitz University of Technology\\
Chemnitz\\
Germany.}
\email{marcel.hansmann@mathematik.tu-chemnitz.de}

\subjclass[2010]{47A75, 47A10, 47A55, 47B10, 30C15, 30E20}   
\keywords{Eigenvalues, discrete spectrum, non-selfadjoint perturbations, von Neumann-Schatten ideals, zeros of holomorphic functions, Cauchy transforms}

 \maketitle

\section{Introduction}

If a selfadjoint operator $A$ on a separable Hilbert space $\hil$ is perturbed by a non-selfadjoint compact perturbation $K$, then the essential spectra of $B=A+K$ and $A$ coincide. However, the spectrum of $B$ can contain an additional countable set of isolated complex eigenvalues of finite algebraic multiplicity. These \textit{discrete} eigenvalues and their variation with respect to the spectrum of $A$ are the topic of this article. 

The following estimate is one of our main results: If $K=B-A$ is an element of the von Neumann-Schatten ideal $\mathcal{S}_p(\hil)$ for some $p > 1$,  then there exists a constant $C_p$, independent of $A$ and $B$, such that
\begin{equation}
  \label{eq:1}
 \sum_{\lambda \in \sigma_d(B)} \dist(\lambda,\sigma(A))^p \leq C_p \|B-A\|_{p}^p.  
\end{equation}
Here $\sigma(A)$ and $\sigma_d(B)$ denote the spectrum and the discrete spectrum (i.e. the set of all discrete eigenvalues) of $A$ and $B$, respectively, and each eigenvalue is counted according to its algebraic multiplicity. We recall that $\mathcal{S}_p(\hil)$ consists of all compact operators $K$ on $\hil$ whose singular values $s_n(K)$ are $p$-summable and that $\|K\|_p^p=\sum_{n} s_n(K)^p$. The constant $C_p$ in (\ref{eq:1}) tends to infinity for $p \to 1$ and $p \to\infty$. Moreover, for $p=2$ we obtain $C_2=2$. The example
$$ A= \left( 
  \begin{array}{cc}
    0 & 1 \\
    1 & 0
  \end{array} \right), \quad B= \left( 
  \begin{array}{cc}
    0 & 1 \\
    0 & 0  
  \end{array}\right)$$
shows that this value is sharp.

To put estimate (\ref{eq:1}) into perspective, let us take a look at some previous results of this type: If $A$ \emph{and} $B$ are selfadjoint then (\ref{eq:1}) is true for $p \geq 1$ and with  $C_p=1$, as it has been shown by Kato \cite{MR900507}. Bhatia and Elsner \cite{MR1314392} showed the validity of  (\ref{eq:1}) for $p \geq 1$ if $A$ is selfadjoint and $B$ is normal. Relaxing the selfadjointness assumption on $A$,  Bhatia and Davis \cite{MR1702207} proved the validity of (\ref{eq:1}), for $p \geq 1$ and with $C_p=1$, in case that $A,B$ and $B-A$ are normal operators.  Bhatia and Davis' result remains true if $A$ and $B$ (but not necessarily $B-A$) are normal, but only if $p\geq 2$, see Bouldin \cite{MR577759}.  To be precise, most of the above authors consider an estimate which is slightly stronger than (\ref{eq:1}). Namely, given the stated restrictions on $A$ and $B$ they show that
there exist extended enumerations $\{\alpha_j\}$ and $\{\beta_j\}$ of $\sigma_d(A)$ and $\sigma_d(B)$, respectively, such that
$$ \sum_{j} |\beta_j-\alpha_j|^p \leq C_p\|B-A\|_p^p,$$  
where an extended enumeration of $\sigma_d(.)$ is a sequence which contains all discrete eigenvalues and which in addition may contain boundary points of the essential spectrum.

The case of most interest to us, where $A$ is selfadjoint and $B$ is arbitrary, has been studied in the finite-dimensional case by Kahan \cite{MR0369394}, whose work contains a proof of (\ref{eq:1}) for $p=2$,  and, more recently, by Gil' \cite{MR2777268}. Indeed, while Gil' considered estimates on the real parts of the eigenvalues of $B$ only, the validity of (\ref{eq:1}) in the finite-dimensional case can easily be derived from his results and below we will adapt his main idea to prove the estimate in the general case.

One might ask whether (\ref{eq:1}) remains true (with $B$ arbitrary) when the assumptions on $A$ are relaxed. Here in general the answer will be no: For instance, see Remark 2.5 in \cite{Hansmann10}, one can construct a normal (or even unitary) matrix $A \in \mc^{n \times n}$ and a corresponding $B \in \mc^{n \times n}$ with $\|B-A\|_p=1$ such that 
$$ \sum_{\lambda \in \sigma_d(B)} \dist(\lambda,\sigma(A))^p = n.$$
However, even for general $A$ \emph{and} $B$  one can at least show that 
  \begin{equation*}
 \sum_{\lambda \in \sigma_d(B)} \dist(\lambda,\operatorname{Num}(A))^p \leq \|B-A\|_{p}^p,  \qquad p \geq 1,
  \end{equation*}
where $\operatorname{Num}(A)$ denotes the numerical range of $A$, see \cite{Hansmann10}. Since the closure of the numerical range of a selfadjoint operator coincides with the convex hull of its spectrum, this estimate implies that for $A$ selfadjoint with $\sigma(A)=[a,b]$ and $B$ arbitrary 
\begin{equation}\label{eq:hans}
  \sum_{\lambda \in \sigma_d(B)} \dist(\lambda,[a,b])^p \leq \|B-A\|_{p}^p,  \qquad p \geq 1.
\end{equation}
Note that in (\ref{eq:1}) we made no assumptions at all on the structure of the spectrum of the selfadjoint operator $A$. The price we pay for this generality, as compared to estimate (\ref{eq:hans}), is the multiplicative constant $C_p$ occurring in (\ref{eq:1}). In particular, since $C_p \to \infty$ for $p\to1$, we can show the validity of (\ref{eq:1})  only for $p>1$. Whether this exclusion of the case  $p=1$ is really necessary, or whether it is just an artefact of our method of proof, remains an open question.

It is interesting to compare our estimate with another recent result, by Golinskii and Kupin \cite{golinskii11}. Using Blaschke-type estimates for holomorphic  functions on finitely-connected domains they showed (among other things) that if  $A$ is selfadjoint with 
$$\sigma(A)= [a_1,a_2] \cup \ldots \cup [a_{2n-1},a_{2n}], \qquad a_1<a_2<\ldots < a_{2n},$$ and $B$ is arbitrary, then for every $\eps \in (0,1)$ there exists $C=C(\eps,p,\sigma(A))$ such that 
\begin{equation}
  \label{eq:gol}
\sum_{\lambda \in \sigma_d(B)} \frac{\dist(\lambda,\sigma(A))^{p+1+\eps}}{\dist(\lambda,\{a_1,\ldots,a_{2n}\})(1+|\lambda|)} \leq C \|B-A\|_{p}^p, \qquad p \geq 1.
\end{equation}
Since for $\eps \in (0,1)$ we can find $C(\eps,A)$ such that for all $\lambda \in \mc \setminus \sigma(A)$
$$ \frac{\dist(\lambda,\sigma(A))^{p+1+\eps}}{\dist(\lambda,\{a_1,\ldots,a_{2n}\})(1+|\lambda|)} \leq C(\eps,A) \dist(\lambda, \sigma(A))^p,$$
we see that, at least in case $p>1$ and ignoring the constants,  estimate (\ref{eq:1}) is stronger than (\ref{eq:gol}). 
In addition, we note again that, in contrast to (\ref{eq:gol}), estimate (\ref{eq:1}) is valid without any further restrictions on $\sigma(A)$. We should stress that for more specific operators the estimates on holomorphic functions derived in \cite{golinskii11} might lead to better estimates than can be obtained from (\ref{eq:1}), see \cite{MR2773062} for a related discussion. 

Considering applications of our estimate, we note that just as (\ref{eq:hans}) has been used to derive Lieb-Thirring type inequalities for Schr\"odinger operators $-\Delta + V$, with a complex-valued potential $V$ (see \cite{Hansmann10}),  
estimate (\ref{eq:1}) can be used to obtain such inequalities when the Laplacian $-\Delta$ is replaced by a more general selfadjoint operator. Since at the moment of writing of this introduction a preliminary version of this article has been available for some time, we can refer the reader to the work of Golinskii and  Kupin \cite{GK2013}, who used (\ref{eq:1}) to study non-selfadjoint perturbations of a selfadjoint finite band Schr\"odinger operator, and to the work of Sambou \cite{Sambou}, who used (\ref{eq:1}) in the study of non-selfadjoint perturbations of magnetic Schr\"odinger operators.

In this article, we present yet another but less immediate application of (\ref{eq:1}). Namely, we will derive a unitary analog of this estimate and use it to study the distribution of zeros of certain holomorphic functions on the unit disk $\md$. We will show that for every \textit{Cauchy transform} of a finite, complex Borel measure $\mu$ on the unit circle $\mt$, i.e. for every function $h$ of the form
 \[ h(w) = \int_\mt \frac{\mu(d\zeta)}{1-\overline{\zeta}w}, \qquad w \in \md,\]
we have
 \[ \sum_{h(w)=0, w \in \md} \dist(w, \supp(\mu))^p < \infty\] 
for every $p>1$. This condition can be regarded as a softer, stronger version of the well-known Blaschke condition $$\sum_{h(w)=0, w \in \md} (1-|w|) < \infty,$$ see \cite{Cauchy}.

\section{The main result}

Let $\bdd(\hil)$ and $\mathcal{S}_\infty(\hil)$ denote the classes of bounded and compact operators on $\hil$, respectively. We define the real and imaginary part of $T \in \bdd(\hil)$ as
$$ \Re(T) = (T+T^*)/2, \quad \Im(T)= (T-T^*)/(2i).$$
Note that $\Re(T)$ and $\Im(T)$ are selfadjoint and that $T \in \mathcal{S}_p(\hil)$ if and only if both $\Re(T) \in \mathcal{S}_p(\hil)$ and $\Im(T) \in \mathcal{S}_p(\hil)$.

One of the most important ingredients in Gil's proof of (\ref{eq:1}) in the finite-dimensional case \cite{MR2777268} is the following classical result of Macaev \cite{MR0136997} (see also \cite{MR0264447}, Section III.6). It is concerned with the real and imaginary parts of abstract Volterra operators. 
\begin{prop}\label{prop:ma}
Let $T \in \mathcal{S}_\infty(\hil)$ with $\sigma(T)=\{0\}$. If $\Im(T) \in \mathcal{S}_p(\hil)$ for some $p>1$, then $\Re(T) \in \mathcal{S}_p(\hil)$ and there exists a constant $b_p \geq 1$ such that 
\begin{equation}\label{eq:bp}
 \|\Re(T)\|_p \leq b_p \|\Im(T)\|_p. 
\end{equation}
\end{prop}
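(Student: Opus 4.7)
My plan is to reduce the proposition to the fundamental theorem of Gohberg and Krein on the boundedness of triangular truncation in the Schatten ideals $\mathcal{S}_p$ for $1<p<\infty$.

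First, since $T$ is compact with $\sigma(T)=\{0\}$, it is a Volterra operator, and I would invoke the Gohberg--Krein triangular representation: there exists a maximal chain of orthogonal projections $\mathcal{P}=\{P_\xi\}_{\xi\in[0,1]}$ with $P_0=0$, $P_1=I$, increasing and strongly continuous in $\xi$, such that each subspace $P_\xi\hil$ is $T$-invariant, i.e.\ $TP_\xi=P_\xi T P_\xi$. Because the only eigenvalue of $T$ is $0$, the ``diagonal part'' of $T$ relative to $\mathcal{P}$ vanishes, so $T$ is strictly upper triangular with respect to $\mathcal{P}$, and dually $T^*$ is strictly lower triangular.

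The second step is to introduce the upper and lower triangular truncation operators $\mathcal{T}_\pm$ associated with $\mathcal{P}$, defined on a dense subset of $\mathcal{S}_\infty(\hil)$ via norm-limits of Riemann--Stieltjes-type sums $\sum_k (I-P_{\xi_{k-1}}) A (P_{\xi_k}-P_{\xi_{k-1}})$. The triangularity of $T$ and $T^*$ then yields $T=\mathcal{T}_+(T-T^*)=2i\,\mathcal{T}_+(\Im T)$ and $T^*=-2i\,\mathcal{T}_-(\Im T)$, leading to the pivotal identity
\begin{equation*}
  \Re T \;=\; \tfrac{1}{2}(T+T^*) \;=\; i\bigl[\mathcal{T}_+(\Im T)-\mathcal{T}_-(\Im T)\bigr],
\end{equation*}
which represents $\Re T$ as a ``Hilbert transform'' of $\Im T$ along the chain $\mathcal{P}$.

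The main obstacle is the boundedness of $\mathcal{T}_+-\mathcal{T}_-$ (equivalently, of $\mathcal{T}_+$) on $\mathcal{S}_p$ for $1<p<\infty$. This is the deep Macaev--Gohberg--Krein theorem on triangular truncation, the operator-ideal analogue of M.~Riesz' theorem on the $L^p$-boundedness of the conjugate function. I would approach it by first discretising the chain $\mathcal{P}$, reducing to upper-triangular truncation of $n\times n$ matrices, and then transferring the problem, via a diagonal-multiplier/averaging argument, to Riesz' classical inequality on $L^p(\mt)$. Granted this bound, the displayed identity yields $\|\Re T\|_p\leq b_p\|\Im T\|_p$ with $b_p$ controlled by $\|\mathcal{T}_+\|_{\mathcal{S}_p\to\mathcal{S}_p}$. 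The hypothesis $p>1$ is essential: the conjugate function is unbounded on $L^1$, and triangular truncation accordingly fails on $\mathcal{S}_1$, which rules out any direct extension of this method to the endpoint $p=1$.
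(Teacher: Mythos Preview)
The paper does not supply its own proof of this proposition; it is quoted as a classical result of Macaev, with a pointer to Gohberg--Krein's monograph, Section~III.6. Your outline via the triangular representation of a Volterra operator and the $\mathcal{S}_p$-boundedness ($1<p<\infty$) of triangular truncation is exactly the route taken in that reference, so in effect your proposal agrees with the cited proof.

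One small technical correction: the maximal chain $\{P_\xi\}$ of $T$-invariant projections need not be strongly continuous; it can have jumps. However, by Ringrose's theorem each jump has rank one, and the corresponding ``diagonal entry'' of $T$ at such a jump is an eigenvalue of $T$, hence equals $0$ by the hypothesis $\sigma(T)=\{0\}$. Thus $T$ is still strictly upper triangular with respect to the chain, and the identities $T=2i\,\mathcal{T}_+(\Im T)$ and $\Re T=i[\mathcal{T}_+(\Im T)-\mathcal{T}_-(\Im T)]$ remain valid; the rest of the argument is unaffected.
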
 
Here the constant $b_p$, which will be used below, satisfies the following properties (see \cite{MR0264447} Theorem III.6.3 and its accompanying remark):
\begin{enumerate}
\item[(i)] $p \mapsto b_p$ is monotonically increasing on $[2,\infty)$.
\item[(ii)] If $p\geq 2$ then  $\operatorname{cot}\left(\pi / (2p) \right)  \leq  b_p < p/(\ln(2) e^{2/3})$. 
\item[(iii)]  $b_p =\operatorname{cot} \left(\pi / (2p) \right)$ if $p=2^n, n \in \mn$. In particular, $b_2=1$.
\item[(iv)] If $1<p< 2$ then  $b_{p} = b_{p/(p-1)}$.
\end{enumerate}
In order to state our main result we set
\begin{equation}\label{eq:gamma}
\Gamma_p=\left(1+b_p^{\frac{p}{p-1}} \right)^{p-1}, \quad p>1.
\end{equation}
\begin{rem}\label{rem:estbp}
For later purposes let us note that (i)-(iv) imply that $\Gamma_2=2$ and $\Gamma_p \geq 2$ for all $p >1$.
\end{rem} 
\begin{thm}\label{thm:main}
  Let $A \in \bdd(\hil)$ be selfadjoint and let $B \in \bdd(\hil)$ such that $B-A \in \mathcal{S}_p(\hil)$ for some $p>1$. Then the following holds:
If $p=2$ then
\begin{equation}\label{eq:main1}
  \sum_{\lambda \in \sigma_d(B)} \left( \dist(\Re(\lambda),\sigma(A))^2 + 2|\Im(\lambda)|^2 \right) \leq 2 \|B-A\|_2^2.
\end{equation}
If $p > 2$ then 
\begin{equation}\label{eq:main2}
  \sum_{\lambda \in \sigma_d(B)} \left( \dist(\Re(\lambda),\sigma(A))^p + 2^{p-2} \Gamma_p |\Im(\lambda)|^p \right) 
\leq  4^{p-2} \Gamma_p \|B-A\|_p^p. 
\end{equation}
If $1<p<2$ then 
\begin{equation}\label{eq:main3}
\sum_{\lambda \in \sigma_d(B)}  \left( \dist(\Re(\lambda),\sigma(A))^p + \Gamma_p |\Im(\lambda)|^p \right) 
\leq 12^{2-p} \Gamma_p  \|B-A\|_p^p.
\end{equation}
Here each eigenvalue is counted according to its algebraic multiplicity.
\end{thm}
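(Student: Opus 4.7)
Following Gil, the argument combines Schur's triangularization of $B$, Macaev's inequality (Proposition \ref{prop:ma}), and Kato's theorem for self-adjoint perturbations. I would first prove the estimate under the assumption $\dim \hil < \infty$ and then pass to infinite dimensions via Riesz projections. In the finite-dimensional case, write $B = D + V$ in a Schur basis, where $D = \operatorname{diag}(\lambda_1, \ldots, \lambda_n)$ enumerates the eigenvalues of $B$ with algebraic multiplicity and $V$ is strictly upper triangular (so $\sigma(V) = \{0\}$). Since $A$ is self-adjoint, splitting $B = A + K$ into real and imaginary parts yields the two identities
$$ \Re(D) - A = \Re(K) - \Re(V), \qquad \Im(K) = \Im(D) + \Im(V). $$

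The first identity drives the real-part estimate. Kato's theorem applied to the self-adjoint operators $\Re(D)$ (whose spectrum is $\{\Re(\lambda_i)\}$) and $A$ yields $\sum_i \dist(\Re(\lambda_i), \sigma(A))^p \leq \|\Re(D) - A\|_p^p = \|\Re(K) - \Re(V)\|_p^p$; Macaev's theorem applied to the quasinilpotent $V$ gives $\|\Re(V)\|_p \leq b_p \|\Im(V)\|_p$; and the triangle inequality followed by H\"older with conjugate exponents $(p, p/(p-1))$ produces
$$ \|\Re(K) - \Re(V)\|_p^p \leq \bigl(\|\Re(K)\|_p + b_p \|\Im(V)\|_p\bigr)^p \leq \Gamma_p \bigl(\|\Re(K)\|_p^p + \|\Im(V)\|_p^p\bigr), $$
which is exactly where the constant $\Gamma_p$ from (\ref{eq:gamma}) enters. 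For the imaginary parts I use the second identity: in the Schur basis, $\Im(D) = \operatorname{diag}(\Im(\lambda_i))$ is the diagonal of the self-adjoint matrix $\Im(K)$, so Schur--Horn majorization gives $\sum |\Im(\lambda_i)|^p \leq \|\Im(K)\|_p^p$, and the off-diagonal piece $\|\Im(V)\|_p = \|\Im(K) - \Im(D)\|_p$ is controlled by $\|\Im(K)\|_p$ via the same pinching (with the sharp Pythagorean equality $\|\Im(V)\|_2^2 + \|\Im(D)\|_2^2 = \|\Im(K)\|_2^2$ in the case $p = 2$). Assembling these estimates, adding an appropriate multiple of $\sum |\Im(\lambda_i)|^p$ to the Kato bound, and invoking the correct Clarkson--McCarthy inequality to pass from $\|\Re(K)\|_p^p + \|\Im(K)\|_p^p$ to $\|K\|_p^p$ yields each of (\ref{eq:main1})--(\ref{eq:main3}); the three distinct final constants reflect the three distinct regimes of Clarkson--McCarthy.

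For arbitrary $\hil$, given any finite $F \subset \sigma_d(B)$ I use the Riesz projection $P$ onto the algebraic eigenspaces associated to $F$ to obtain a finite-dimensional $B$-invariant subspace $M = P\hil$. The subtlety is that the finite-dim Schur argument naturally produces distances to $\sigma(PAP)$ rather than to $\sigma(A)$; this is remedied by applying Kato on all of $\hil$ to the self-adjoint operator $\Re(D) \oplus A_3$ (where $A_3$ is the compression of $A$ to $M^\perp$) versus $A$, using that the resulting cross-terms are governed by $A_{12} = -K_{21}^*$ and hence by $\|K\|_p$. Exhausting $\sigma_d(B)$ by finite $F$'s completes the proof. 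I expect the main obstacle to be the bookkeeping of constants in the assembly step: obtaining the stated $2$, $4^{p-2}\Gamma_p$, and $12^{2-p}\Gamma_p$ requires carefully selecting the Clarkson--McCarthy direction appropriate to each range of $p$ and combining it precisely with the H\"older bound that supplies $\Gamma_p$ and with the Schur--Horn / pinching bound on $\|\Im(V)\|_p$.
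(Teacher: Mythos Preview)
Your proposal is correct and follows essentially the same route as the paper: Schur triangularization on the Riesz-projection range, Macaev's inequality on the strictly upper-triangular part, Kato applied to the auxiliary self-adjoint operator $C=\Re(D)\oplus A_3$ versus $A$, and Clarkson--McCarthy to recombine real and imaginary parts. One small caution on the bookkeeping you flag at the end: the stated constants arise not only from Clarkson--McCarthy ($M_p$) but also from the Bhatia--Kittaneh block-norm inequalities ($L_p$, $M_p$) and an interpolation bound $\|I_1\|_p^p+\|\Im(U_1)\|_p^p\le N_p\|\Im(B_1)\|_p^p$ (with $N_p=2^{p-2}$ for $p\ge 2$ and $3^{2-p}$ for $p<2$); plain pinching plus the triangle inequality would not recover the exact factors $4^{p-2}$ and $12^{2-p}$.
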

In the finite-dimensional case, as already remarked above, estimate (\ref{eq:main1}) has been proved by Kahan \cite{MR0369394} and estimates (\ref{eq:main2}) and (\ref{eq:main3}) are consequences of results proved  by Gil' \cite{MR2777268}. The example 
$$ A= \left(
  \begin{array}{cc}
    0 & 1 \\
    1 & 0
  \end{array} \right), \quad B= \left( 
  \begin{array}{cc}
    ib & 1 \\
    0 & ib  
  \end{array}\right), \quad b > 0$$
shows that estimate (\ref{eq:main1}) is sharp.
\begin{cor}\label{cor:1}
  Given the assumptions of Theorem \ref{thm:main} we have
  \begin{equation}
    \sum_{\lambda \in \sigma_d(B)} \dist(\lambda, \sigma(A))^p \leq C_p \|B-A\|_p^p,
  \end{equation}
where $C_2=2$ and
$$C_p = \left\{
    \begin{array}{cl}
      2^{\frac{p}{2}-1}  4^{p-2} \Gamma_p, & p \in (2,\infty), \\[4pt]
      12^{2-p} \Gamma_p, & p \in (1,2).
    \end{array}\right.$$
\end{cor}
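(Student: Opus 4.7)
The plan is to reduce the Corollary directly to Theorem \ref{thm:main} via two elementary ingredients: the geometric identity that distance to a real set splits into real and imaginary contributions, and the standard power-mean comparison between $(a+b)^{p/2}$ and $a^{p/2}+b^{p/2}$.

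First I would record the key geometric observation. Since $A$ is selfadjoint, $\sigma(A)\subset\mr$, and hence for every $\lambda\in\mc$,
\begin{equation*}
  \dist(\lambda,\sigma(A))^2=\dist(\Re(\lambda),\sigma(A))^2+|\Im(\lambda)|^2,
\end{equation*}
because the projection of the nearest-point problem onto the real line is optimal. In particular, for $p=2$ this gives $\dist(\lambda,\sigma(A))^2\leq\dist(\Re(\lambda),\sigma(A))^2+2|\Im(\lambda)|^2$, so summing over $\lambda\in\sigma_d(B)$ and invoking \eqref{eq:main1} yields the claim with $C_2=2$.

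Next I would treat the case $p>2$. Writing $a=\dist(\Re(\lambda),\sigma(A))^2$ and $b=|\Im(\lambda)|^2$, the convexity of $t\mapsto t^{p/2}$ gives
\begin{equation*}
  (a+b)^{p/2}\leq 2^{p/2-1}\left(a^{p/2}+b^{p/2}\right),
\end{equation*}
i.e.\ $\dist(\lambda,\sigma(A))^p\leq 2^{p/2-1}\bigl(\dist(\Re(\lambda),\sigma(A))^p+|\Im(\lambda)|^p\bigr)$. By Remark \ref{rem:estbp} we have $2^{p-2}\Gamma_p\geq 1$ for $p\geq 2$, so the right-hand side is dominated termwise by $2^{p/2-1}$ times the summand in \eqref{eq:main2}. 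Summing and applying Theorem \ref{thm:main} then produces the constant $2^{p/2-1}\cdot 4^{p-2}\Gamma_p$ as claimed. For $1<p<2$, the concavity (subadditivity) of $t\mapsto t^{p/2}$ instead gives $(a+b)^{p/2}\leq a^{p/2}+b^{p/2}$, so
\begin{equation*}
  \dist(\lambda,\sigma(A))^p\leq\dist(\Re(\lambda),\sigma(A))^p+|\Im(\lambda)|^p,
\end{equation*}
and since $\Gamma_p\geq 1$ the right-hand side is bounded by the summand of \eqref{eq:main3}; applying Theorem \ref{thm:main} again gives $C_p=12^{2-p}\Gamma_p$.

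There is no real obstacle here: the entire argument is a matter of choosing the right elementary inequality to pass from $(\Re\text{-distance})^2+(\Im\text{-part})^2$ to the weighted sum appearing in Theorem \ref{thm:main}. The only point to double-check is that the coefficients $2^{p-2}\Gamma_p$ (for $p\geq 2$) and $\Gamma_p$ (for $p<2$) are at least $1$, which is ensured by Remark \ref{rem:estbp}, so that discarding the weight on the imaginary part is valid.
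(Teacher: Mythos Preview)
Your proof is correct and follows essentially the same approach as the paper's own proof. The paper condenses the argument into the single elementary inequality $(a^q+b^q)\geq\min(2^{1-q},1)(a+b)^q$ applied with $q=p/2$ to $a=\dist(\Re(\lambda),\sigma(A))^2$ and $b=|\Im(\lambda)|^2$, together with Remark \ref{rem:estbp}; you have simply unpacked these two cases explicitly and made the underlying geometric identity $\dist(\lambda,\sigma(A))^2=\dist(\Re(\lambda),\sigma(A))^2+|\Im(\lambda)|^2$ (which the paper leaves implicit) visible.
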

\begin{proof}
  Use  Theorem \ref{thm:main}, Remark \ref{rem:estbp} and the estimate 
  \begin{equation*}
(a^q + b^q) \geq \min(2^{1-q},1) (a+b)^{q}, \qquad a,b, q \geq 0.    
  \end{equation*}
\end{proof}

The proof of Theorem \ref{thm:main} will be given in Section \ref{sec:proof}. In the next section, we collect some preparatory results 
mainly concerning block operator matrices.

\section{Some preparatory lemmas}

First, note that given a closed linear subspace $E$ of $\hil$ every operator $T \in \bdd(\hil)$ can be decomposed as
\begin{equation}
  \label{eq:p1}
  T = \left(
    \begin{array}{cc}
      T_1 & T_2 \\
      T_3 & T_4
    \end{array}\right) : E \oplus F \to E \oplus F,
\end{equation}
where $F=E^\perp$ denotes the orthogonal complement of $E$ and so $\hil=E\oplus F$ is the orthogonal sum of $E$ and $F$. Here $T_1 \in \bdd(E)$, $T_2 \in \bdd(F,E)$, $T_3 \in~\bdd(E,F)$ and $T_4 \in \bdd(F)$. More precisely, if $P_E$ and $P_{F}$ denote the orthogonal projections onto $E$ and $F$, respectively, then we have
$$ T_1 = P_E T|_E, \quad T_2 = P_E T|_{F}, \quad T_3 = P_{F} T|_{E} \quad \text{and} \quad T_4=P_{F} T|_{F}.$$
\begin{lemma}
  Let $T,T_i$ be defined as in (\ref{eq:p1}). Then $T$ is selfadjoint if and only if $T_1$ and $T_4$ are selfadjoint and $T_3=T_2^*$.  
\end{lemma}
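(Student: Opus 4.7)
The plan is to reduce the selfadjointness of $T$ to a componentwise identity by computing the block matrix of $T^*$ and comparing it with that of $T$. The key claim, which I would establish first, is the general block-adjoint formula
\begin{equation*}
T^* = \left( \begin{array}{cc} T_1^* & T_3^* \\ T_2^* & T_4^* \end{array} \right) : E \oplus F \to E \oplus F,
\end{equation*}
where $T_1^* \in \bdd(E)$, $T_3^* \in \bdd(F,E)$, $T_2^* \in \bdd(E,F)$ and $T_4^* \in \bdd(F)$. Once this formula is in hand, the lemma is immediate: $T=T^*$ holds if and only if the four block entries coincide, i.e.\ $T_1=T_1^*$, $T_4=T_4^*$, and $T_3=T_2^*$ (which is the same as $T_2=T_3^*$).

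To verify the block-adjoint formula, I would proceed directly from the identities $T_i = P_\cdot T P_\cdot$ given in the text, using that the orthogonal projections $P_E$ and $P_F$ are selfadjoint. For instance, for any $x \in E$ and $y \in F$,
\begin{equation*}
\langle T_3 x, y \rangle_F = \langle P_F T P_E x, y \rangle_\hil = \langle x, P_E T^* P_F y \rangle_\hil = \langle x, (P_E T^* |_F) y \rangle_E,
\end{equation*}
so the $(1,2)$-block of $T^*$ is $P_E T^*|_F = T_3^*$. The remaining three entries are handled identically. Alternatively, one can expand $\langle T(x_E+x_F), y_E+y_F\rangle$ into the four natural inner products on $E$ and $F$ and read off the adjoint blocks.

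The combination is then immediate. If $T_1,T_4$ are selfadjoint and $T_3=T_2^*$, then the block formula shows the matrices of $T$ and $T^*$ agree entry-by-entry, so $T=T^*$. Conversely, if $T=T^*$, comparing the $(1,1)$, $(2,2)$ and off-diagonal blocks yields the three asserted identities. I expect no real obstacle here; the only thing one must be careful about is bookkeeping the domains and codomains (in particular that $T_2^*$ lives in $\bdd(E,F)$, matching the block where $T_3$ sits), but this is built into the statement.
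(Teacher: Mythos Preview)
Your argument is correct. The paper states this lemma without proof, treating it as a standard fact about block operator matrices; your computation of the block form of $T^*$ via the selfadjointness of $P_E$ and $P_F$ is exactly the routine verification one would supply.
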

The next result is due to Bhatia and Kittaneh, see \cite{MR1066826} Theorem 2. 
\begin{lemma}\label{lem:est}
  Let $T,T_i$ be defined as in (\ref{eq:p1}) and let $p \geq 1$. Then $T \in \mathcal{S}_p(\hil)$ if and only if $T_1 \in \mathcal{S}_p(E)$, $T_2 \in \mathcal{S}_p(F,E)$, $T_3 \in \mathcal{S}_p(E,F)$ and $T_4 \in \mathcal{S}_p(F)$. Moreover, in this case we have
  \begin{equation}
    L_p  \|T\|_p^p \leq \sum_{i=1}^4 \|T_i\|_{p}^p \leq  M_p \|T\|_p^p,
  \end{equation}
where 
\begin{equation} \label{eq:const1}
 L_p := \left\{
  \begin{array}{cc}
    2^{2-p}, & p \in [2,\infty) \\
    1, &  p \in [1,2)
  \end{array}\right. \quad \text{and} \quad M_p := \left\{
  \begin{array}{cc}
     1, &  p \in [2,\infty) \\
   2^{2-p}, & p \in [1,2) 
  \end{array}\right. .
\end{equation}
\end{lemma}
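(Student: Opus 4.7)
The ``if and only if'' assertion is the easy half: since each $T_i$ has the form $P_X T P_Y$ for orthogonal projections $P_X, P_Y \in \{P_E, P_F\}$, the ideal property of $\mathcal{S}_p(\hil)$ gives $T \in \mathcal{S}_p \Rightarrow T_i \in \mathcal{S}_p$. Conversely, each $T_i$ extends canonically to an operator on $\hil$ with the same Schatten norm, and $T$ is the sum of these four extensions, so $T \in \mathcal{S}_p$ whenever every $T_i$ does.

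For the norm inequalities, my plan is to exploit a unitary-conjugation trick. Set $U := P_E - P_F \in \bdd(\hil)$, which is selfadjoint and unitary. A direct block computation gives
$$
UTU = \begin{pmatrix} T_1 & -T_2 \\ -T_3 & T_4 \end{pmatrix},
$$
so that $T_d := \tfrac{1}{2}(T + UTU)$ equals the block-diagonal part $\operatorname{diag}(T_1,T_4)$ while $T_o := \tfrac{1}{2}(T - UTU)$ is the block-antidiagonal part. Because $T_d^* T_d$ and $T_o^* T_o$ are both block-diagonal, the singular values of $T_d$ (resp.\ $T_o$) are the union of those of $T_1,T_4$ (resp.\ $T_2,T_3$), and hence
$$
\|T_d\|_p^p + \|T_o\|_p^p \;=\; \sum_{i=1}^{4} \|T_i\|_p^p.
$$

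The next step is to invoke the Clarkson--McCarthy inequalities for Schatten norms: for $p \geq 2$ and any $A,B \in \mathcal{S}_p(\hil)$ one has
$$
2\bigl(\|A\|_p^p + \|B\|_p^p\bigr) \;\leq\; \|A+B\|_p^p + \|A-B\|_p^p \;\leq\; 2^{p-1}\bigl(\|A\|_p^p + \|B\|_p^p\bigr),
$$
and for $1 \leq p \leq 2$ both inequalities reverse. Applying this with $A = T_d$, $B = T_o$ and using the unitary invariance $\|UTU\|_p = \|T\|_p$, the middle term equals $2\|T\|_p^p$. A short rearrangement of the resulting chain yields precisely the constants $L_p$ and $M_p$ claimed in the lemma, with the role of $L_p$ and $M_p$ swapped between the two regimes $p \geq 2$ and $1 \leq p \leq 2$ exactly as advertised.

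The main obstacle is of course the Clarkson--McCarthy inequality itself, which is a genuinely non-trivial statement about Schatten classes (one standard route proves it from operator convexity of $t \mapsto t^{p/2}$ when $p \geq 2$, then dualises via $\mathcal{S}_p \leftrightarrow \mathcal{S}_{p/(p-1)}$ to get the case $1 < p < 2$, with the case $p = 1$ reducing to the triangle inequality). Once this ingredient is accepted, the rest of the argument is just algebraic bookkeeping, and the constants $L_p,M_p$ drop out directly.
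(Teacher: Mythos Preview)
Your argument is correct. The paper does not actually prove this lemma but simply cites Bhatia and Kittaneh \cite{MR1066826}, Theorem~2; your proof via the reflection $U=P_E-P_F$ together with the Clarkson--McCarthy inequalities is precisely the argument used in that reference, so you have reproduced the intended proof.
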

Recall that the essential spectrum of $T \in \bdd(\hil)$ is defined as
$$ \sigma_{ess}(T)= \{ \lambda \in \sigma(T) :  \lambda-T \text{ is not a Fredholm operator} \}.$$
\begin{lemma}\label{lem:spec}
  Let $\hil= E \oplus F$ be defined as above and let $S \in \bdd(\hil)$ be given by
$$S=\left(
    \begin{array}{cc}
      S_1 & 0 \\
      0 & S_2
    \end{array}\right) : E \oplus F \to E \oplus F.$$
Then the following holds:
  \begin{enumerate}
  \item[(i)] $\sigma(S)=\sigma(S_1) \cup \sigma(S_2)$.
  \item[(ii)] $\sigma_{ess}(S)=\sigma_{ess}(S_1) \cup \sigma_{ess}(S_2)$.
  \item[(iii)] If $S_1$ and $S_2$ (and so $S$) are selfadjoint, then 
$$\sigma_d(S)= \left[ \sigma_d(S_1) \setminus \sigma_{ess}(S_2) \right]  \cup \left[ \sigma_d(S_2) \setminus \sigma_{ess}(S_1) \right].$$      
  \end{enumerate}
\end{lemma}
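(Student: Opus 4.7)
The plan is to establish (i) and (ii) by direct inspection of the block-diagonal structure of $\lambda-S$, and then derive (iii) as a short set-theoretic consequence.

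For part (i), I would note that $\lambda-S$ has block-diagonal form with diagonal entries $\lambda-S_1$ and $\lambda-S_2$. Such an operator is boundedly invertible on $E\oplus F$ if and only if both diagonal blocks are boundedly invertible on $E$ and $F$ respectively, with inverse $\operatorname{diag}((\lambda-S_1)^{-1},(\lambda-S_2)^{-1})$. This immediately gives $\sigma(S)=\sigma(S_1)\cup\sigma(S_2)$.

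For part (ii), the key observation is that the direct-sum decomposition is preserved by the Fredholm property. One has $\ker(\lambda-S)=\ker(\lambda-S_1)\oplus\ker(\lambda-S_2)$ and $\ran(\lambda-S)=\ran(\lambda-S_1)\oplus\ran(\lambda-S_2)$, the latter as a sum of subspaces lying in the orthogonal summands $E$ and $F$. Consequently $\ker(\lambda-S)$ is finite dimensional iff both $\ker(\lambda-S_i)$ are, $\ran(\lambda-S)$ is closed in $\hil$ iff both $\ran(\lambda-S_i)$ are closed in their respective spaces, and the cokernel of $\lambda-S$ decomposes as the direct sum of the cokernels of $\lambda-S_1$ and $\lambda-S_2$. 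Hence $\lambda-S$ is Fredholm iff both $\lambda-S_1$ and $\lambda-S_2$ are Fredholm, and combining this with (i) yields $\sigma_{ess}(S)=\sigma_{ess}(S_1)\cup\sigma_{ess}(S_2)$.

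For part (iii), I would use that, by the previous lemma, $S_1$ and $S_2$ selfadjoint implies $S$ selfadjoint, and for any bounded selfadjoint operator $T$ one has $\sigma_d(T)=\sigma(T)\setminus\sigma_{ess}(T)$ (a standard consequence of the spectral theorem: every spectral point outside $\sigma_{ess}$ is an isolated eigenvalue of finite multiplicity). Combining this with parts (i) and (ii) gives
\[ \sigma_d(S)=\bigl(\sigma(S_1)\cup\sigma(S_2)\bigr)\setminus\bigl(\sigma_{ess}(S_1)\cup\sigma_{ess}(S_2)\bigr), \]
and a short set-theoretic manipulation, using the disjointness $\sigma_d(S_i)\cap\sigma_{ess}(S_i)=\emptyset$, rearranges this into the claimed form. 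I expect no genuine obstacle: the only mildly delicate step is the closedness of $\ran(\lambda-S)$ in (ii), which however is immediate since the two summand ranges lie in orthogonal subspaces of $\hil$.
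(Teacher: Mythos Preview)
Your proposal is correct and follows essentially the same route as the paper's proof: the paper simply invokes that $E$ and $F$ are reducing subspaces for $S$ to obtain (i) and the Fredholm equivalence underlying (ii), and then derives (iii) from the identity $\sigma_d(T)=\sigma(T)\setminus\sigma_{ess}(T)$ for selfadjoint $T$ together with (i) and (ii). You have simply unpacked these steps in more detail (explicit inverse, kernel/range/cokernel decomposition, and the set-theoretic rearrangement), but the argument is the same.
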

\begin{proof}
  The first statement follows from the  fact that $E$ and $F$ are reducing subspaces for $S$, which also implies that  $\lambda-S$ is Fredholm if and only if both $S_1$ and $S_2$ are Fredholm (which
is the second statement). The third statement is a consequence of (i) and (ii)  and the fact that for a selfadjoint operator $T$ we have $\sigma_{d}(T)=\sigma(T) \setminus \sigma_{ess}(T)$. 
\end{proof}
In the proof of the following lemma $\|T\|_\infty$ denotes the operator norm of $T \in \bdd(\hil)$. 
\begin{lemma}\label{lem:inter}
For $K \in \mc^{n \times n}$ let $K_D \in \mc^{n \times n}$ denote its diagonal, i.e.
$$ (K_D)_{ij}= \left\{
  \begin{array}{cl}
    (K)_{ij}, & i=j, \\
    0, & i \neq j, 
  \end{array} \right.$$
and set $K_O=K-K_D$. Then 
\begin{equation}
  \label{eq:ll}
  \|K_D\|_p^p + \|K_O\|_p^p \leq N_p \|K\|_p^p,  
\end{equation}
where
\begin{equation}\label{eq:const2}
N_p= \left\{
  \begin{array}{cl}
    2^{p-2}, & p \in [2,\infty) \\
    3^{2 -p}, & p \in [1,2).
  \end{array}\right.  
\end{equation}
\end{lemma}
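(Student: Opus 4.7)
The plan is to view the map
\[
\Phi: \mc^{n \times n} \to \mc^{2n \times 2n}, \qquad \Phi(K) = \begin{pmatrix} K_D & 0 \\ 0 & K_O \end{pmatrix},
\]
as a linear operator between Schatten classes and to bound its operator norm by Riesz--Thorin interpolation. Since the Schatten $p$-norm of a block-diagonal matrix equals the $\ell^p$-sum of its blocks' Schatten $p$-norms, $\|\Phi(K)\|_p^p = \|K_D\|_p^p + \|K_O\|_p^p$, so the claim (\ref{eq:ll}) is equivalent to the operator-norm bound $\|\Phi\|_{p \to p} \leq N_p^{1/p}$.

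I would first establish three endpoint estimates. The key preliminary fact is the pinching inequality $\|K_D\|_p \leq \|K\|_p$ for every $p \geq 1$, which follows from the averaging representation
\[
K_D = \frac{1}{2^n} \sum_{\epsilon \in \{\pm 1\}^n} D_\epsilon K D_\epsilon, \qquad D_\epsilon = \operatorname{diag}(\epsilon_1, \ldots, \epsilon_n),
\]
combined with unitary invariance and the triangle inequality. This also yields $\|K_O\|_p = \|K - K_D\|_p \leq 2 \|K\|_p$. At $p = 2$, orthogonality of the diagonal and off-diagonal matrix entries in the Hilbert--Schmidt inner product gives the exact identity $\|K_D\|_2^2 + \|K_O\|_2^2 = \|K\|_2^2$, so $\|\Phi\|_{2 \to 2} = 1$. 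At $p = 1$ the preliminary bounds give $\|\Phi\|_{1 \to 1} \leq 3$, and at $p = \infty$ (operator norm), $\|\Phi\|_{\infty \to \infty} \leq \max(1, 2) = 2$.

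With these endpoints in hand, complex interpolation of Schatten classes finishes the proof. For $p \in [2, \infty]$, interpolating between $p_0 = 2$ and $p_1 = \infty$ with parameter $\theta = 1 - 2/p$ gives $\|\Phi\|_{p \to p} \leq 1^{1-\theta} \cdot 2^\theta = 2^{1 - 2/p}$, which raised to the $p$-th power is exactly $N_p = 2^{p-2}$. For $p \in [1, 2]$, interpolating between $p_0 = 1$ and $p_1 = 2$ with $\theta = 2 - 2/p$ gives $\|\Phi\|_{p \to p} \leq 3^{1-\theta} \cdot 1^\theta = 3^{2/p - 1}$, yielding $N_p = 3^{2-p}$.

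The only genuinely nontrivial input is the pinching bound $\|K_D\|_p \leq \|K\|_p$, and even that is standard; everything else is bookkeeping. The main conceptual obstacle is identifying that the three natural endpoints $p \in \{1, 2, \infty\}$ combine via Riesz--Thorin to produce exactly the claimed exponents. The fact that the stated constants $2^{p-2}$ and $3^{2-p}$ match so precisely what interpolation predicts from the endpoint constants $(3, 1, 2)$ is strong evidence that this is the intended argument.
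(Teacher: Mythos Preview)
Your proof is correct and matches the paper's argument essentially line for line: the paper defines the same block-diagonal map $G(K)=\begin{pmatrix} K_D & 0 \\ 0 & K_O \end{pmatrix}$, computes the endpoint norms $\|G\|_{(2)}=1$, $\|G\|_{(1)}\leq 3$, $\|G\|_{(\infty)}\leq 2$ using $\|K_D\|_p\leq\|K\|_p$, and then interpolates. The only cosmetic difference is that you justify the pinching inequality via the explicit averaging formula, whereas the paper simply cites it.
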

We note that the validity of estimate (\ref{eq:ll}) with a constant $\tilde{N}_p=(1+2^p)$ is an immediate consequence of the triangle inequality and the fact that $\|K_D\|_p \leq \|K\|_p$, see \cite{MR1477662}. However, as compared to the constant in (\ref{eq:const2}) this constant does not give the value $1$ for $p=2$, which we need in order to obtain the sharp value $C_2=2$ in (\ref{eq:1}).
\begin{proof}[Proof of Lemma \ref{lem:inter}]
For $p \in [1,\infty]$ define a linear operator $G : \mathcal{S}_p(\mc^n) \to \mathcal{S}_p(\mc^{2n})$ by 
$$ G(K)= \left(
  \begin{array}{cc}
    K_D & 0 \\
    0 & K_O
  \end{array}\right), \quad K \in \mc^{n \times n}.$$
Then
\begin{equation}
  \label{eq:AA}
\|G(K)\|_p = \left( \|K_D\|_p^p + \|K_O\|_p^p\right)^{1/p}, \qquad p \in [1,\infty),  
\end{equation}
and $ \|G(K)\|_\infty= \max( \|K_D\|_\infty, \|K_O\|_\infty)$. Moreover, we have $\|G(K)\|_2=\|K\|_2$ and for $p \in [1,\infty)$ we can estimate 
$$ \|G(K)\|_p^p \leq \|K_D\|_p^p + (\|K\|_p + \|K_D\|_p)^p \leq  (1+2^p)\|K\|_p^p.$$
Denoting the operator norm of $G$ by  $\|G\|_{(p)}$, i.e.
$$ \|G\|_{(p)} = \sup_{K \in \mc^{n \times n}, K \neq 0} \frac{\|G(K)\|_p}{\|K\|_p},$$
we thus obtain that $ \|G\|_{(2)} = 1$, $\|G\|_{(1)} \leq 3$ and $\|G\|_{(\infty)} \leq 2$.
Using interpolation (see, e.g., \cite{MR0358340} Theorem 8) we can conclude that
$$ \|G\|_{(p)} \leq \left\{
  \begin{array}{cl}
    2^{1-\frac 2 p}, & p \in [2,\infty] \\
    3^{\frac 2 p -1}, & p \in [1,2).
  \end{array}\right.$$
This estimate, together with (\ref{eq:AA}), implies the validity of (\ref{eq:ll}).
\end{proof}
\begin{rem}
The constant in estimate (\ref{eq:ll}) is sharp for $p  \in \{1,2\}$ and for $p=\infty$ (when understood in the obvious way). For $p=2$ this was shown in the previous proof, and for $p=1$ and $p=\infty$ it  can be seen by considering the matrix $E \in \mc^{n \times n}$, whose entries are all ones, and the matrix $E-\frac{n}{2} I$, respectively, and sending $n \to \infty$. 
\end{rem}
Our final preparatory result uses one side of the Clarkson-McCarthy inequalities (see \cite{MR0225140}): If $S,T \in \mathcal{S}_p(\hil), p \geq 1,$ then
\begin{equation}
  \label{eq:clark}
  \|T\|_p^p +\|S\|_p^p \leq \frac{M_p}{2} \left( \|S+T\|_p^p + \|S-T\|_p^p \right),
\end{equation}
where $M_p$ was defined in (\ref{eq:const1}).
\begin{lemma}\label{lem:real}
  Let $K \in \mathcal{S}_p(\hil), p \geq 1$. Then
  \begin{equation}
\|\Re(K)\|_p^p + \|\Im(K)\|_p^p \leq M_p \|K\|_p^p.    
  \end{equation}
\end{lemma}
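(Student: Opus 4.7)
The plan is to apply the Clarkson--McCarthy inequality (\ref{eq:clark}) directly with a convenient choice of $S$ and $T$ that makes $\Re(K)$ and $\Im(K)$ appear on the left-hand side while $K$ and $K^*$ appear on the right.

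Specifically, I would set $S = \Re(K)$ and $T = i \Im(K)$. By the definitions $\Re(K)=(K+K^*)/2$ and $\Im(K)=(K-K^*)/(2i)$, we then have $S+T = K$ and $S-T = K^*$. Moreover $\|T\|_p = \|i \Im(K)\|_p = \|\Im(K)\|_p$ since multiplication by the scalar $i$ is an isometry on $\mathcal{S}_p(\hil)$, and $\|K^*\|_p = \|K\|_p$ since the singular values of $K$ and $K^*$ coincide.

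Plugging this into (\ref{eq:clark}) yields
\begin{equation*}
\|\Im(K)\|_p^p + \|\Re(K)\|_p^p \leq \frac{M_p}{2}\bigl( \|K\|_p^p + \|K^*\|_p^p \bigr) = M_p \|K\|_p^p,
\end{equation*}
which is the desired estimate.

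There is essentially no obstacle here; the lemma is a direct corollary of the Clarkson--McCarthy inequality, and the only substantive step is choosing $S$ and $T$ so that the polarization-type identities $S\pm T \in \{K,K^*\}$ hold. The constant $M_p$ from (\ref{eq:const1}) propagates without any loss because the two terms on the right of (\ref{eq:clark}) are equal in $p$-norm.
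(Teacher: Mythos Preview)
Your proof is correct and essentially identical to the paper's: both apply the Clarkson--McCarthy inequality (\ref{eq:clark}) with $S=\Re(K)$ and $T=i\Im(K)$, then use $S+T=K$, $S-T=K^*$, and $\|K^*\|_p=\|K\|_p$ to conclude.
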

\begin{proof}
Using (\ref{eq:clark}) 
we obtain 
 \begin{eqnarray*}
&&   \|\Re(K)\|_p^p + \|\Im(K)\|_p^p = \|\Re(K)\|_p^p + \|i\Im(K)\|_p^p \\
&\leq& \frac{M_p}{2} \left( \|\Re(K)+ i\Im(K)\|_p^p + \|\Re(K)-i\Im(K)\|_p^p \right) \\
&=& \frac{M_p}{2} \left( \|K\|_p^p + \|K^*\|_p^p \right)= M_p \|K\|_p^p. \
 \end{eqnarray*}
\end{proof}

\section{The proof of Theorem \ref{thm:main}}\label{sec:proof} 
  
Let  $A \in \bdd(\hil)$ be selfadjoint  and let $B \in \bdd(\hil)$ such that $B-A \in \mathcal{S}_p(\hil)$ where $p > 1$.
In the following we fix an arbitrary finite subset $\Lambda \subset \sigma_d(B)$. Let $P_B(\Lambda)$ denote the corresponding Riesz projection (see, e.g., \cite{MR1130394}) and set
$$ E = {\ran(P_B(\Lambda))}, \quad N = \dim(E) \quad \text{and} \quad F=E^\perp.$$
Note that the closed subspace $E$ is the linear span of all eigenvectors and generalized eigenvectors corresponding to the eigenvalues in $\Lambda$ and $N \in \mn$ coincides with the sum of the algebraic multiplicities of these eigenvalues. In particular, $E$ is $B$-invariant and $\sigma(B|_E)=\sigma_d(B|_E)=\Lambda$. For the rest of this proof let us agree that 
\begin{equation}
\lambda_1, \ldots, \lambda_N  
\end{equation}
denote the eigenvalues of $B$ in $\Lambda$, where each eigenvalue is counted according to its algebraic multiplicity. 

With respect to the decomposition $\hil=E \oplus F$ the operator $B$ can be written as (recall that $E$ is $B$-invariant)
$$ B = \left(
  \begin{array}{cc}
    B_1 & B_2 \\
    0 & B_3
  \end{array}\right),$$
where $B_1=B|_E$. Similarly, with respect to the same decomposition we can write
$$ A = \left(
  \begin{array}{cc}
    A_1 & A_2 \\
    A_2^* & A_3
  \end{array}\right),$$ 
where $A_1$ and $A_3$ are selfadjoint. 
\begin{rem}
As a consequence of  Lemma \ref{lem:est} and the fact that the $\mathcal{S}_p$-norm of an operator and its adjoint coincide, we obtain
  \begin{equation}
   \label{eq:5}
 \|B_1-A_1\|_p^p+\|B_2-A_2\|_p^p+\|A_2\|_p^p+\|B_3-A_3\|_p^p   \leq M_p \|B-A\|_p^p,
  \end{equation}
where $M_p$ was defined in (\ref{eq:const1}).  
\end{rem}
Our problem is invariant under unitary similarity, so (invoking Schur's theorem) without loss of generality we can assume that $E=\mc^N$, that $A_1, B_1 \in \mc^{N \times N}$ and that $B_1$ is upper-triangular, i.e.
\begin{equation}
  \label{eq:4}
B_1=\left(
  \begin{array}{ccccc}
    \lambda_{1} & b_{12} & \cdots&  &    b_{1N} \\
    0 & \lambda_{2} & b_{23} & \cdots &   b_{2N} \\
    \vdots  &  \ddots & \ddots & \ddots  &    \\
    \vdots  &   & \ddots &\ddots   & b_{N-1,N} \\
    0  & \cdots & \cdots & 0  & \lambda_{N} 
  \end{array} \right). 
\end{equation}
Next, following the approach of Kahan and Gil',  we will further split up the matrix $B_1$. 
To this end, let us define the hermitian diagonal matrices
$$ R_1= \operatorname{diag}(\Re(\lambda_1), \ldots, \Re(\lambda_N)) \quad \text{and} \quad 
  I_1= \operatorname{diag}(\Im(\lambda_1), \ldots, \Im(\lambda_N)),$$
and  the strictly upper-triangular matrix $U_1=B_1-R_1-iI_1$, i.e.
$$ U_1= 
\left(
  \begin{array}{ccccc}
    0 & b_{12} & \cdots&  &    b_{1N} \\
    0 & 0 & b_{23} & \cdots &   b_{2N} \\
    \vdots  & \ddots & \ddots & \ddots  &    \\
    \vdots  &   & \ddots &\ddots   & b_{N-1,N} \\
    0  & \cdots & \cdots & 0  & 0 
  \end{array} \right). $$
Note that 
  \begin{equation}
    \Re(B_1)= R_1 + \Re(U_1) \quad \text{and} \quad
    \Im(B_1)= I_1 + \Im(U_1).
  \end{equation}
\begin{lemma}
\begin{equation}\label{eq:diag}
\|I_1\|_p^p+ \|\Im(U_1)\|_p^p  \leq N_p \|\Im(B_1)\|_p^p,
   \end{equation}
where $N_p \geq 1$ was defined in (\ref{eq:const2}). 
  \end{lemma}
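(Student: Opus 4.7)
The plan is to realize $I_1$ and $\Im(U_1)$ as the diagonal and off-diagonal parts of $\Im(B_1)$, respectively, and then apply Lemma \ref{lem:inter} to $\Im(B_1)$. Since $B_1$ is upper-triangular with $\lambda_1,\ldots,\lambda_N$ on the diagonal, we have the decomposition $B_1 = R_1 + iI_1 + U_1$ with $R_1, I_1$ diagonal and $U_1$ strictly upper-triangular. Taking adjoints and forming $\Im(B_1) = (B_1 - B_1^*)/(2i)$, the diagonal contributions $R_1$ cancel and the $iI_1$ contributions reinforce, yielding $I_1$ on the diagonal. The off-diagonal part is $(U_1 - U_1^*)/(2i)$, which equals $\Im(U_1)$ (and has zero diagonal since $U_1$ is strictly upper-triangular and $U_1^*$ strictly lower-triangular).

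Thus, writing $K := \Im(B_1) \in \mc^{N\times N}$, we have $K_D = I_1$ and $K_O = \Im(U_1)$ in the notation of Lemma \ref{lem:inter}. A direct application of that lemma gives
\[
\|I_1\|_p^p + \|\Im(U_1)\|_p^p = \|K_D\|_p^p + \|K_O\|_p^p \leq N_p \|K\|_p^p = N_p \|\Im(B_1)\|_p^p,
\]
which is exactly (\ref{eq:diag}).

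There is no genuine obstacle here; the argument is essentially a bookkeeping exercise to verify that the splitting $\Im(B_1) = I_1 + \Im(U_1)$ is precisely the diagonal/off-diagonal decomposition. Once this is observed, Lemma \ref{lem:inter} does all the real work, and the constant $N_p$ is inherited unchanged.
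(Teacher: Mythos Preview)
Your proof is correct and follows exactly the paper's approach: the paper's proof consists of the single sentence ``Apply Lemma~\ref{lem:inter} to $K=\Im(B_1)=I_1+\Im(U_1)$,'' and your argument is precisely this with the bookkeeping spelled out.
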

  \begin{proof}
  Apply Lemma \ref{lem:inter} to  $K=\Im(B_1)=I_1 + \Im(U_1)$. 
  \end{proof}
 \begin{lemma}
We have
  \begin{equation}
      \label{eq:I1}
      \|I_1\|_p^p  = \sum_{k=1}^N |\Im(\lambda_k)|^p
    \end{equation}
and
\begin{equation}
  \label{eq:U1}
  \|\Re(U_1)\|_p \leq b_p \|\Im(U_1)\|_p,
\end{equation}
where $b_p$ was defined in (\ref{eq:bp}).
\end{lemma}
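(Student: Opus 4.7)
The plan is to handle the two identities separately and to use Macaev's theorem (Proposition \ref{prop:ma}) as the key tool for the second one.

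For \eqref{eq:I1}, I would observe that $I_1 = \operatorname{diag}(\Im(\lambda_1),\ldots,\Im(\lambda_N))$ is a selfadjoint diagonal matrix. Consequently $|I_1| = \operatorname{diag}(|\Im(\lambda_1)|,\ldots,|\Im(\lambda_N)|)$, so the singular values of $I_1$ are precisely the numbers $|\Im(\lambda_k)|$, $k=1,\ldots,N$. By the definition of the Schatten $p$-norm we then get $\|I_1\|_p^p = \sum_{k=1}^N |\Im(\lambda_k)|^p$, which is \eqref{eq:I1}. No real work is needed here.

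For \eqref{eq:U1}, the idea is to apply Macaev's theorem to $T=U_1$. First I would check that $U_1$ is an abstract Volterra operator. Since $U_1$ is a strictly upper-triangular $N\times N$ matrix, it is nilpotent (indeed $U_1^N=0$), and therefore $\sigma(U_1)=\{0\}$. Since $U_1$ acts on the finite-dimensional space $\mc^N$, it automatically belongs to $\mathcal{S}_\infty(\mc^N)$, and $\Im(U_1)\in\mathcal{S}_p(\mc^N)$ for every $p>1$. Proposition \ref{prop:ma} then yields $\|\Re(U_1)\|_p \leq b_p \|\Im(U_1)\|_p$, which is exactly \eqref{eq:U1}.

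I don't anticipate a real obstacle here: the first identity is a direct computation with a diagonal operator, and the second is a one-line application of Macaev's inequality, where the only thing to verify is the Volterra property $\sigma(U_1)=\{0\}$, and this is immediate from strict upper-triangularity.
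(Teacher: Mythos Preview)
Your proposal is correct and matches the paper's own proof essentially line for line: the identity \eqref{eq:I1} is immediate from the definition of $I_1$, and \eqref{eq:U1} follows from Proposition~\ref{prop:ma} applied to $U_1$, using that a strictly upper-triangular matrix is nilpotent and hence has $\sigma(U_1)=\{0\}$. You have simply spelled out a few more details than the paper does.
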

  \begin{proof}
The identity is a direct consequence of the definition of $I_1$ and  the inequality is implied by Proposition \ref{prop:ma} and the fact that $\sigma(U_1)=\{0\}$. 
  \end{proof}
As a final definition let us set
\begin{equation}
  \label{eq:C}
 C = \left(
  \begin{array}{cc}
    R_1 & 0 \\
    0   & A_3
  \end{array}\right) : E \oplus F \to E \oplus F.
\end{equation}
Then $C$ is selfadjoint and 
$$C-A= \left(
  \begin{array}{cc}
    R_1-A_1 & -A_2 \\
    -A_2^* & 0
  \end{array}\right) \in \mathcal{S}_p(\hil).$$
By construction, the points $\Re(\lambda_i), i=1,\ldots,N,$ are eigenvalues of $C$. The next lemma studies when these eigenvalues are isolated. 

\begin{lemma}\label{lem:mult}
Let $\lambda_1, \ldots, \lambda_N$ be as above. Then the following holds:
\begin{enumerate}
\item If $\Re(\lambda_i) \notin \sigma_{ess}(A)$, then $\Re(\lambda_i) \in \sigma_d(C)$.
\item If $\Re(\lambda_i) \in \sigma_d(C)$, then its algebraic multiplicity is not smaller than the algebraic multiplicity of $\lambda_i$ as an eigenvalue of $B$.
\end{enumerate}
\end{lemma}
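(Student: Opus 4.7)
The plan is to exploit the block diagonal structure $C = R_1 \oplus A_3$ together with Weyl's theorem. Since the perturbation $C-A$ belongs to $\mathcal{S}_p(\hil) \subset \mathcal{S}_\infty(\hil)$, I will invoke the stability of the essential spectrum under compact perturbations to conclude $\sigma_{ess}(C) = \sigma_{ess}(A)$. This identity serves as the bridge between the hypotheses, which are phrased in terms of $A$, and the conclusions, which are phrased in terms of $C$.

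For part (1), I would first note that $\Re(\lambda_i)$ is a diagonal entry of the finite-dimensional diagonal matrix $R_1$, hence an eigenvalue of $R_1$, and so belongs to $\sigma(C)$ by Lemma \ref{lem:spec}(i). The hypothesis $\Re(\lambda_i) \notin \sigma_{ess}(A)$ combined with $\sigma_{ess}(C)=\sigma_{ess}(A)$ then places $\Re(\lambda_i)$ in $\sigma(C) \setminus \sigma_{ess}(C)$, which for the selfadjoint operator $C$ equals $\sigma_d(C)$; this last identity was already used in the proof of Lemma \ref{lem:spec}(iii).

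For part (2), the argument is essentially spectral bookkeeping. Since $\lambda_1, \ldots, \lambda_N$ enumerates the eigenvalues of $B$ in $\Lambda$ counted with algebraic multiplicity, the eigenvalue $\lambda_i$ appears in this list exactly $m_i$ times, where $m_i$ denotes its algebraic multiplicity as an eigenvalue of $B$. Consequently $\Re(\lambda_i)$ occurs at least $m_i$ times on the diagonal of $R_1$, so the eigenspace of the hermitian matrix $R_1$ at $\Re(\lambda_i)$ has dimension at least $m_i$. Extending each such eigenvector by zero on $F$ yields an eigenvector of $C = R_1 \oplus A_3$ at $\Re(\lambda_i)$, so the geometric multiplicity of $\Re(\lambda_i)$ as an eigenvalue of $C$ is at least $m_i$, and \emph{a fortiori} its algebraic multiplicity is at least $m_i$.

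I do not anticipate a substantive obstacle here, as the lemma is really an exercise in spectral bookkeeping for a block diagonal selfadjoint operator. The only point deserving some care is the invocation of Weyl's theorem in the right form (for the Fredholm-theoretic definition of essential spectrum used in the paper), which is entirely standard for compact perturbations of bounded operators on Hilbert space.
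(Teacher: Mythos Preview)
Your proof is correct and follows essentially the same route as the paper. Both arguments use Weyl's theorem to identify $\sigma_{ess}(C)=\sigma_{ess}(A)$ and then exploit the block diagonal structure $C=R_1\oplus A_3$; the paper packages the first part via Lemma~\ref{lem:spec}(iii) while you unpack it with Lemma~\ref{lem:spec}(i) and the identity $\sigma_d(C)=\sigma(C)\setminus\sigma_{ess}(C)$ for selfadjoint $C$, and for part~(2) you spell out in detail what the paper simply calls a ``direct consequence of the definition of $C$ and $\{\lambda_i\}_{i=1}^N$''.
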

\begin{proof}
Using Weyl's theorem  and the fact that $\sigma_{ess}(R_1)=\emptyset$ we obtain from Lemma \ref{lem:spec}.(ii) that 
$$\sigma_{ess}(A)= \sigma_{ess}(C)=\sigma_{ess}(R_1) \cup \sigma_{ess}(A_3)=\sigma_{ess}(A_3).$$ 
From Lemma \ref{lem:spec}.(iii) we obtain $ \sigma_d(R_1) \setminus \sigma_{ess}(A_3) \subset \sigma_d(C)$,
so we have 
$$ \sigma_d(R_1) \setminus \sigma_{ess}(A) \subset \sigma_d(C).$$
Since $\Re(\lambda_i) \in \sigma_d(R_1)$ the first statement follows. The second statement is a direct consequence of the definition of $C$ and $\{\lambda_i\}_{i=1}^N$.
\end{proof}
Now we can start with the actual estimate.
\begin{lemma} \label{lem:half}
We have
\begin{equation}
  \label{eq:half}
 \sum_{k=1}^N \dist(\Re(\lambda_k),\sigma(A))^p \leq  \|C-A\|_p^p.  
\end{equation}
\end{lemma}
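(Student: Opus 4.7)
The plan is to apply Kato's inequality for selfadjoint perturbations, cited in the introduction as \cite{MR900507}, to the pair $(C,A)$. Both $A$ and $C$ are selfadjoint with $C-A\in\mathcal{S}_p(\hil)$, so Kato's result yields
$$\sum_{\mu\in\sigma_d(C)}\dist(\mu,\sigma(A))^p\leq \|C-A\|_p^p,$$
where eigenvalues on the left are counted with algebraic multiplicity. In view of this, it suffices to establish the comparison
$$\sum_{k=1}^N \dist(\Re(\lambda_k),\sigma(A))^p\leq \sum_{\mu\in\sigma_d(C)}\dist(\mu,\sigma(A))^p.$$

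To prove this comparison, I would first discard all indices $k$ with $\Re(\lambda_k)\in\sigma(A)$, which contribute nothing to the left-hand side. For any remaining $k$ with $r:=\Re(\lambda_k)\notin\sigma(A)$, the inclusion $\sigma_{ess}(A)\subseteq\sigma(A)$ gives $r\notin\sigma_{ess}(A)$, whence the first part of Lemma \ref{lem:mult} places $r$ in $\sigma_d(C)$. It then remains to check that multiplicities are handled correctly. Grouping the surviving indices by their common value $r$, let $m(r):=|\{k:1\leq k\leq N,\ \Re(\lambda_k)=r\}|$. Since $R_1$ is diagonal, $m(r)$ equals the algebraic multiplicity of $r$ as an eigenvalue of $R_1$. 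The block-diagonal form $C=R_1\oplus A_3$, together with selfadjointness of both summands, yields $\ker(C-r)\supseteq\ker(R_1-r)$, so the algebraic multiplicity of $r$ as an eigenvalue of $C$ (which equals the dimension of its eigenspace by selfadjointness) is at least $m(r)$. Summing over distinct values of $r$ then delivers the desired comparison.

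The only delicate step is this multiplicity bookkeeping; Kato's inequality and Lemma \ref{lem:mult} perform all the heavy lifting, but one must ensure that each summand on the left is matched by a corresponding copy (counted with multiplicity) in the sum over $\sigma_d(C)$ on the right. Since all the summands are nonnegative, no cancellation issues arise and the outlined grouping argument suffices.
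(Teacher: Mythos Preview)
Your proposal is correct and follows essentially the same route as the paper: apply Kato's selfadjoint inequality to the pair $(C,A)$, drop the zero summands, and use Lemma~\ref{lem:mult} (part~1) to place the surviving $\Re(\lambda_k)$ in $\sigma_d(C)$. Your explicit multiplicity bookkeeping via $\ker(C-r)\supseteq \ker(R_1-r)\oplus\{0\}$ is in fact a bit sharper than the paper's citation of Lemma~\ref{lem:mult} part~(2), since it directly handles the case where distinct $\lambda_i,\lambda_j\in\Lambda$ share a common real part.
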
 
\begin{proof} 
Since $C$ and $A$ are selfadjoint we can apply Kato's theorem \cite{MR900507} (i.e. the validity of (\ref{eq:1}) with $C_p=1$) to obtain
$$ \sum_{\mu \in \sigma_d(C)} \dist(\mu,\sigma(A))^p \leq  \|C-A\|_p^p.$$
But Lemma \ref{lem:mult} shows that 
\begin{eqnarray*}
\sum_{k=1}^N \dist(\Re(\lambda_k),\sigma(A))^p &=& \sum_{k \in \{1, \ldots, N \} : {\Re(\lambda_k) \notin \sigma_{ess}(A)}} \dist(\Re(\lambda_k),\sigma(A))^p \\
&\leq&  \sum_{\mu \in \sigma_d(C)} \dist(\mu,\sigma(A))^p. 
\end{eqnarray*}
 \end{proof}
In the following we will provide a suitable upper bound for $\|C-A\|_p^p$.
 \begin{lemma}
We have
  \begin{equation*}
    \|C-A\|_p^p \leq  L_p^{-1} \Gamma_p \left( \|\Re(B_1-A_1)\|_p^p + \|\Im(U_1)\|_p^p + \|A_2\|_p^p\right),
  \end{equation*}
where $L_p$ and $\Gamma_p$ were defined in (\ref{eq:const1}) and (\ref{eq:gamma}), respectively.
 \end{lemma}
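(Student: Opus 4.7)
The plan is to first apply Lemma \ref{lem:est} (Bhatia--Kittaneh) to the block decomposition of $C-A$, and then reduce the estimate on the upper-left block $R_1-A_1$ to the data $\Re(B_1-A_1)$ and $\Im(U_1)$ using Macaev's inequality together with a weighted convexity trick.

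\emph{Step 1: Block decomposition.} Writing
$$C-A=\left(\begin{array}{cc} R_1-A_1 & -A_2\\ -A_2^* & 0\end{array}\right)$$
with respect to $\hil=E\oplus F$, Lemma \ref{lem:est} yields
$$L_p\|C-A\|_p^p\leq \|R_1-A_1\|_p^p+\|A_2\|_p^p+\|A_2^*\|_p^p=\|R_1-A_1\|_p^p+2\|A_2\|_p^p,$$
where we used that $\|T\|_p=\|T^*\|_p$. So I reduce to bounding $\|R_1-A_1\|_p^p$.

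\emph{Step 2: Rewriting $R_1-A_1$.} Since $A_1$ is selfadjoint, $A_1=\Re(A_1)$. By definition $R_1=\Re(B_1)-\Re(U_1)$, hence
$$R_1-A_1=\Re(B_1-A_1)-\Re(U_1).$$
The triangle inequality in $\mathcal{S}_p$ combined with Macaev's inequality (\ref{eq:U1}) then gives
$$\|R_1-A_1\|_p\leq \|\Re(B_1-A_1)\|_p+b_p\|\Im(U_1)\|_p.$$

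\emph{Step 3: Weighted convexity to produce $\Gamma_p$.} The main step is to pass from the sum on the right-hand side to a sum of $p$-th powers with the precise constant $\Gamma_p$. For any $u,v>0$ with $u+v=1$ the convexity of $t\mapsto t^p$ gives
$$(x+y)^p=\left(u\cdot \tfrac{x}{u}+v\cdot \tfrac{y}{v}\right)^p\leq u^{1-p}x^p+v^{1-p}y^p,\qquad x,y\geq 0.$$
Applied with $x=\|\Re(B_1-A_1)\|_p$, $y=b_p\|\Im(U_1)\|_p$, and the (optimal) choice
$$u=\Gamma_p^{-1/(p-1)},\qquad v=b_p^{p/(p-1)}\Gamma_p^{-1/(p-1)},$$
one checks from the definition (\ref{eq:gamma}) that $u+v=1$, $u^{1-p}=\Gamma_p$ and $b_p^{p}\,v^{1-p}=\Gamma_p$. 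Hence
$$\|R_1-A_1\|_p^p\leq \Gamma_p\bigl(\|\Re(B_1-A_1)\|_p^p+\|\Im(U_1)\|_p^p\bigr).$$

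\emph{Step 4: Assembly.} Combining Steps 1 and 3 and using $\Gamma_p\geq 2$ (Remark \ref{rem:estbp}) to absorb the factor $2$ in front of $\|A_2\|_p^p$,
$$\|C-A\|_p^p\leq L_p^{-1}\bigl(\Gamma_p(\|\Re(B_1-A_1)\|_p^p+\|\Im(U_1)\|_p^p)+2\|A_2\|_p^p\bigr)\leq L_p^{-1}\Gamma_p\bigl(\|\Re(B_1-A_1)\|_p^p+\|\Im(U_1)\|_p^p+\|A_2\|_p^p\bigr),$$
as desired. The only delicate step is finding the right weights $(u,v)$ in Step 3 so that the constant collapses to exactly $\Gamma_p$; everything else is a straightforward application of earlier results.
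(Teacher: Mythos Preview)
Your proof is correct and follows the paper's argument essentially verbatim: block estimate via Lemma~\ref{lem:est}, rewriting $R_1-A_1=\Re(B_1-A_1)-\Re(U_1)$, triangle inequality plus Macaev, and then passing to $p$-th powers with constant $\Gamma_p$. The only cosmetic difference is that the paper invokes H\"older's inequality for the last step, whereas your ``weighted convexity trick'' is precisely the two-term H\"older inequality written out with the optimal weights; the resulting constant $(1+b_p^{p/(p-1)})^{p-1}=\Gamma_p$ is the same.
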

 \begin{proof}
From Lemma \ref{lem:est} we obtain
\begin{eqnarray*}
&& \|C-A\|_p^p \leq L_p^{-1}  \left( \|R_1-A_1\|_p^p + 2 \|A_2\|_p^p \right).
\end{eqnarray*}
Recall that  $R_1-A_1=\Re(B_1-A_1)-\Re(U_1)$ . 
So we can use the triangle inequality, estimate (\ref{eq:U1}) and  H\"older's inequality  to obtain that
\begin{eqnarray*}
&& \|R_1-A_1\|_p^p \leq    \left(\|\Re(B_1-A_1)\|_p+ \|\Re(U_1)\|_p \right)^p   \\
&\leq&    \left(\|\Re(B_1-A_1)\|_p+ b_p\|\Im(U_1)\|_p \right)^p   \\
&\leq& (1+b_p^{\frac{p}{p-1}})^{p-1}  \left( \|\Re(B_1-A_1)\|_p^p + \|\Im(U_1)\|_p^p \right).
\end{eqnarray*}
Now recall that $\Gamma_p= (1+b_p^{\frac{p}{p-1}})^{p-1}\geq 2$ (see Remark \ref{rem:estbp}). 
 \end{proof}
The relevance of the next lemma will become clear in a moment.
\begin{lemma}\label{lem:last}
Let $L_p,N_p, M_p$ and $\Gamma_p$ be defined as above. Then
 \begin{equation}
 \|C-A\|_p^p+ L_p^{-1} \Gamma_p \|I_1\|_p^p \leq   L_p^{-1} \Gamma_p N_p M_p^2  \|B-A\|_p^p.
 \end{equation}
\end{lemma}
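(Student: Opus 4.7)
The plan is to chain together the previously established estimates, starting from the previous lemma and working outward, absorbing factors $N_p$ and $M_p$ (both of which are $\geq 1$) as needed.

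First, by the previous lemma we have
\begin{equation*}
\|C-A\|_p^p \leq L_p^{-1}\Gamma_p\left(\|\Re(B_1-A_1)\|_p^p + \|\Im(U_1)\|_p^p + \|A_2\|_p^p\right),
\end{equation*}
so adding $L_p^{-1}\Gamma_p\|I_1\|_p^p$ to both sides collects $\|I_1\|_p^p+\|\Im(U_1)\|_p^p$ as one block. Next I would invoke the key identity $\Im(B_1)=\Im(B_1-A_1)$, which follows from $A_1$ being selfadjoint, and apply (\ref{eq:diag}) to obtain
\begin{equation*}
\|I_1\|_p^p+\|\Im(U_1)\|_p^p \leq N_p\|\Im(B_1-A_1)\|_p^p.
\end{equation*}

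Using $N_p \geq 1$ to factor out (and thereby attach $N_p$ also to the $\Re$-term and to $\|A_2\|_p^p$, which is only an overestimate), I get
\begin{equation*}
\|C-A\|_p^p + L_p^{-1}\Gamma_p\|I_1\|_p^p \leq L_p^{-1}\Gamma_p N_p\left(\|\Re(B_1-A_1)\|_p^p + \|\Im(B_1-A_1)\|_p^p + \|A_2\|_p^p\right).
\end{equation*}
Now Lemma \ref{lem:real} applied to $K=B_1-A_1$ collapses the real/imaginary parts into $M_p\|B_1-A_1\|_p^p$, and then pulling $M_p\geq 1$ out (again a harmless overestimate on the $\|A_2\|_p^p$ term) produces
\begin{equation*}
\|C-A\|_p^p + L_p^{-1}\Gamma_p\|I_1\|_p^p \leq L_p^{-1}\Gamma_p N_p M_p\left(\|B_1-A_1\|_p^p + \|A_2\|_p^p\right).
\end{equation*}

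Finally I would discard the two nonnegative terms $\|B_2-A_2\|_p^p$ and $\|B_3-A_3\|_p^p$ from (\ref{eq:5}) to conclude
\begin{equation*}
\|B_1-A_1\|_p^p + \|A_2\|_p^p \leq M_p\|B-A\|_p^p,
\end{equation*}
which yields the claimed bound with constant $L_p^{-1}\Gamma_p N_p M_p^2$. There is no real obstacle here; everything is a careful bookkeeping of previous inequalities, and the only mildly subtle point is recognising that $\Im(B_1)=\Im(B_1-A_1)$ so that (\ref{eq:diag}) can actually be fed into the chain, and monotonicity of the constants $N_p,M_p\geq 1$ allows their insertion wherever convenient without losing the final form.
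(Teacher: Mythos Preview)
Your proof is correct and follows essentially the same chain of inequalities as the paper's own argument: previous lemma, then (\ref{eq:diag}), then Lemma~\ref{lem:real}, then (\ref{eq:5}), absorbing $N_p,M_p\geq 1$ along the way. If anything you are slightly more explicit than the paper in singling out the identity $\Im(B_1)=\Im(B_1-A_1)$, which the paper uses tacitly when passing from $\|\Re(B_1-A_1)\|_p^p+\|\Im(B_1)\|_p^p$ to $M_p\|B_1-A_1\|_p^p$ via Lemma~\ref{lem:real}.
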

\begin{proof}
 From the previous lemma and estimate (\ref{eq:diag}) we know that
 \begin{eqnarray*}
&&     \|C-A\|_p^p+  L_p^{-1}  \Gamma_p \|I_1\|_p^p \\
&\leq&       L_p^{-1} \Gamma_p  \left[ \|A_2\|_p^p + \|\Re(B_1-A_1)\|_p^p + \|\Im(U_1)\|_p^p  + \|I_1\|_p^p\right] \\
&\leq&       L_p^{-1} \Gamma_p N_p  \left[ \|A_2\|_p^p + \|\Re(B_1-A_1)\|_p^p + \|\Im(B_1)\|_p^p\right],
 \end{eqnarray*}
where $N_p \geq 1$ was defined in (\ref{eq:const2}). Next, apply Lemma \ref{lem:real} to obtain 
 \begin{eqnarray*}
 \|C-A\|_p^p+ L_p^{-1}\Gamma_p\|I_1\|_p^p \leq   L_p^{-1} \Gamma_p N_p M_p  \left[ \|A_2\|_p^p + \|B_1-A_1\|_p^p\right],
 \end{eqnarray*}
where $M_p \geq 1$ was defined in (\ref{eq:const1}). Finally, an application of  (\ref{eq:5}) leads to the desired result.
\end{proof}

Now we can finish the proof of Theorem \ref{thm:main}: Using (\ref{eq:I1}) we obtain from  Lemma \ref{lem:half}  and Lemma \ref{lem:last} that 
\begin{equation*}
\sum_{k=1}^N \left( \dist(\Re(\lambda_k),\sigma(A))^p +  L_p^{-1} \Gamma_p |\Im(\lambda_k)|^p \right)  \leq   L_p^{-1} \Gamma_p N_p M_p^2 \|B-A\|_p^p.
\end{equation*}
Since $\Lambda=\{\lambda_1, \ldots, \lambda_N\}$ was an arbitrary finite subset of $\sigma_d(B)$ and the right-hand side of the last inequality is independent of $\Lambda$, we can conclude that 
\begin{eqnarray*}
\sum_{\small{\lambda \in \sigma_d(B)}} \left( \dist(\Re(\lambda),\sigma(A))^p + L_p^{-1} \Gamma_p |\Im(\lambda)|^p \right)  \leq L_p^{-1} \Gamma_p N_p M_p^2 \|B-A\|_p^p.
\end{eqnarray*}
All that remains is to evaluate the constants.

\section{An Application}

 We start this section with a version of Corollary \ref{cor:1} for perturbations of unitary operators. Recall that the spectrum of a unitary operator is a subset of the unit circle $\mt= \partial \md$. 
\begin{thm}\label{thm:ap1}
  Let $U \in \bdd(\hil)$ be unitary with $\sigma(U) \neq \mt$  and let $V \in \bdd(\hil)$ such that $V-U \in \mathcal{S}_p(\hil)$ for some $p>1$.
Moreover, let $a \in \mt \setminus (\sigma(U) \cup \sigma(V))$. Then
\begin{equation}
 \sum_{\lambda \in \sigma_d(V)} \frac{\dist(\lambda, \sigma(U))^p}{|a-\lambda|^p} \leq  C_p 2^p\|(a-V)^{-1}- (a-U)^{-1}\|_p^p,
\end{equation}
where $C_p$ was defined in Corollary \ref{cor:1}.
\end{thm}
\begin{rem}\label{rem:fin}
Note that $\sigma(U) \neq \mt$ if and only if $\sigma_{ess}(U) \neq \mt$. Moreover, by Weyl's theorem we have $\sigma_{ess}(V)=\sigma_{ess}(U) \subsetneq \mt$ and so $\mc \setminus \sigma_{ess}(V)$ is connected. This implies that the spectrum of $V$ in $\mc \setminus \sigma_{ess}(V)$ is discrete, see \cite{b_Davies} Theorem 4.3.18. In particular, $\mt \setminus ( \sigma(U) \cup \sigma(V))$ is non-empty whenever $V-U$ is compact and $\sigma(U) \neq \mt$. 
\end{rem} 
The above theorem complements (and in many cases improves) a result of Golinskii and Favorov, see \cite{MR2500508} Theorem 4. See also \cite{FG12}.

\begin{proof}[Proof of Theorem \ref{thm:ap1}]
We define a conformal map $ \phi : \mc \setminus \{a\} \to \mc$ as
$$  \phi(\lambda)= i \frac{a+\lambda}{a-\lambda},$$ 
so $\phi(\md)= \{ \mu: \Re(\mu)> 0 \}$ and $\phi(\mt \setminus \{a\})=\mr$.
Furthermore, let us define the inverse Cayley transforms of $U$ and $V$ as
$$A=\phi(U)=i (a-U)^{-1}(a+U), \qquad B=\phi(V)=i(a-V)^{-1} (a+V).$$ 
Note that $A$ is selfadjoint and by spectral mapping we have  $\sigma(A)=\phi(\sigma(U))$. The spectral mapping theorem also implies that $\lambda \in \sigma_d(V)$ if and only if $\phi(\lambda) \in \sigma_d(B)$, the algebraic multiplicities being preserved. Finally, a short calculation shows that 
\[  A-B  = 2ai \left[ (a-U)^{-1}-(a-V)^{-1} \right] = 2ai(a-U)^{-1}(U-V)(a-V)^{-1},\]
so $A-B \in \mathcal{S}_p(\hil)$ and we can apply Corollary \ref{cor:1} to obtain that  
\begin{eqnarray*}
 \sum_{\lambda \in \sigma_d(V)} \dist(\phi(\lambda), \phi(\sigma(U)) )^p \leq C_p 2^p \| (a-V)^{-1}-(a-U)^{-1} \|_p^p.
\end{eqnarray*}
It remains to note that 
\begin{eqnarray*}
  \dist(\phi(\lambda), \phi(\sigma(U))) &=& \inf_{\xi \in \sigma(U)} \left|\frac{a+\lambda}{a-\lambda} - \frac{a+\xi}{a-\xi}\right| 
= \frac{2}{|a-\lambda|} \inf_{\xi \in \sigma(U)} \left|\frac{\lambda-\xi}{a-\xi} \right| \\
&\geq & \frac{1}{|a-\lambda|} \inf_{\xi \in \sigma(U)} \left|\lambda-\xi \right| 
=  \frac{1}{|a-\lambda|} \dist(\lambda, \sigma(U)) . 
\end{eqnarray*}
\end{proof}
   
In the following we will apply the previous theorem to obtain new results about the distribution of zeros of a class of holomorphic functions on the unit disk, namely, the class $\mathcal{K}$ of all Cauchy transforms of complex Borel measures on the unit circle. It consists of all holomorphic functions $h$ of the form 
\begin{equation}
  \label{eq:K}
 h(w) = \int_\mt \frac{\mu(d\zeta)}{1-\overline{\zeta}w}, \qquad w \in \md,  
\end{equation}
where $\mu$ is some finite, complex Borel measure on $\mt$. We recall that $\mathcal{K}$ contains the classical Hardy spaces $H^q(\md), q \geq 1$. More precisely, we have
\begin{equation*}
 \bigcup_{q \geq 1} H^q(\md) \subsetneq \mathcal{K} \subsetneq \bigcap_{ 0 < q < 1} H^q(\md).  
\end{equation*}
A proof of the above inclusions and many additional results about Cauchy transforms can be found in the monograph \cite{Cauchy}.

What can be said about the distribution of zeros of a Cauchy transform $h \in \mathcal{K}$? Assuming that $h$ is not identically zero, the classical answer is that its zero-set has to satisfy the so-called  \emph{Blaschke condition}, i.e. 
\begin{equation}\label{eq:Blaschke}
  \sum_{h(w)=0, \: w\in \md} (1-|w|) < \infty,
\end{equation}
where each zero is counted according to its order. Indeed, every function in the Hardy class $H^q(\md), q> 0,$ has to satisfy this condition and so does every Cauchy transform. However, it turns out that one can actually say more about the zero-set. To this end, let us first note that in case $\supp(\mu) \neq \mt$ the function $h$ defined in (\ref{eq:K}) can be analytically extended to $\mc \setminus \supp(\mu)$, the complement of the topological support of $\mu$. In particular, the zeros of $h$ can accumulate at $\supp(\mu)$ only, so it seems natural to conjecture that the Blaschke condition (\ref{eq:Blaschke}) can be replaced with the condition that
\begin{equation}
  \label{eq:Blaschke2}
 \sum_{h(w)=0, \: w \in \md} \dist(w, \supp(\mu)) < \infty.
\end{equation}
While we can neither prove nor disprove this conjecture, we can prove a weaker version of (\ref{eq:Blaschke2}).
\begin{thm}\label{thm:ap2}
 Let $\mu$ be a finite, complex Borel measure on $\mt$ with $\supp(\mu) \neq \mt$ and $\mu(\mt) \neq 0$. Moreover, let
$$g(w) = \int_\mt \frac{\mu(d\zeta)}{1-\overline{\zeta}w}, \qquad w \in \mc \setminus \supp(\mu).$$
Then for every $p>1$ we have 
\begin{equation}
    \label{eq:Blaschke3}
 \sum_{g(w)=0} \dist(w, \supp(\mu))^p < \infty,
\end{equation}
where the sum is over all zeros of $g$ in $\mc \setminus \supp(\mu)$ and each zero is counted according to its order.
\end{thm}

\begin{rem}  
(i) Estimate (\ref{eq:Blaschke3}) seems to be new. We will prove it using Theorem \ref{thm:ap1}, i.e. via operator theory. We don't know how (or whether) it can be proven via a classical complex-analysis argument as well. 

\noindent (ii) If it could be shown that (\ref{eq:Blaschke3}) does not necessarily hold for $p=1$, then the same would be true of Theorem \ref{thm:main}. So this opens a possibility to tackle that problem.
 
\noindent (iii) The idea to use operator theoretic arguments to prove results about zeros of Cauchy transforms has been used before, see \cite{HK13}.

\end{rem}
  
\begin{proof}[Proof of Theorem \ref{thm:ap2}] 
It is no restriction to assume that $\mu(\mt)=g(0)=1$. Denoting the total variation measure of $\mu$ by $|\mu|$, we have $d\mu = \nu d|\mu|$ for some measurable function $\nu : \mt \to \mt$. We are going to apply Theorem \ref{thm:ap1} to certain operators on the Hilbert space $\hil= L^2(\mt,d|\mu|)$. That is, we first define a unitary operator $U$ on $\hil$ by setting 
$$(Uf)(\zeta) = \overline{\zeta} f(\zeta).$$   
Note that $\sigma(U)= \{ \zeta \in \mt : \overline{\zeta} \in \supp(\mu)\}$ and $\sigma(U^*)=\supp(\mu)$. Next, we define a rank one operator $L$ on $\hil$ as $Lf = -\langle f, \psi \rangle \phi$, where
$$\phi(\zeta)={\overline{\zeta}}, \qquad \psi(\zeta)= \overline{\nu(\zeta)}.$$ 
Finally, we set $V=U+L$. Note that the spectrum of $V$ in $\mc \setminus \sigma(U)$ is discrete. Moreover,
some $\lambda_0 \in \mc \setminus \sigma(U)$ is in $\sigma_d(V)$ if and only if $\lambda_0$ is a zero of the analytic function
$$ d: \mc \setminus \sigma(U) \ni \lambda \mapsto \det(I-L(\lambda-U)^{-1})$$  
and the multiplicity of $\lambda_0$ as an eigenvalue of $V$ coincides with its order as a zero of $d$, see e.g. \cite{b_Gohberg69}, p.173-174. 
Setting $\lambda=w^{-1}$ and noting that $w \in \mc \setminus \supp(\mu)$ iff $w^{-1} \in \mc \setminus \sigma(U)$, we then compute (recall that $\mu(\mt)=1$)
\begin{eqnarray*}
d(1/w) &=&\det(I-wL(I-wU)^{-1}) = 1+ w \langle (I-wU)^{-1} \phi, \psi \rangle  \\
&=& 1 + \int_{\mt} \frac {w \overline{\zeta}} {1-w\overline{\zeta}}  \mu(d\zeta) 
= \int_{\mt} \frac {1} {1-w\overline{\zeta}}  \mu(d\zeta) = g(w).
\end{eqnarray*}
So we see that $w \in \mc \setminus \supp(\mu)$ is a zero of $g$ if and only if $w^{-1} \in \sigma_d(V)$. 

Since the zero-set of $g$ is discrete and since we assumed that $\supp(\mu) \neq \mt$, there exists $a \in \mt \setminus \supp(\mu)$ (i.e. $\overline{a} \in \mt \setminus \sigma(U)$) with $g(a) \neq 0$. The previous equivalence then shows that $\overline{a}=a^{-1} \in \mt \setminus ( \sigma(U) \cup \sigma(V))$, so we can use Theorem \ref{thm:ap1} to obtain that 
\begin{equation}
  \label{eq:f1}
\sum_{g(w)=0} \frac{\dist(w^{-1}, \sigma(U))^p}{|w^{-1}-\overline{a}|^p}  \leq \sum_{\lambda \in \sigma_d(V)} \frac{\dist(\lambda, \sigma(U))^p}{|\lambda-\overline{a}|^p} < \infty, \qquad p > 1.
\end{equation} 
Since
\begin{eqnarray*}
\frac{\dist(w^{-1}, \sigma(U))}{|w^{-1}-\overline{a}|} &=&   \frac{\dist(w, \sigma(U^*))}{|a-w|} =  \frac{\dist(w, \supp(\mu))}{|a-w|},
\end{eqnarray*}
we arrive at
\begin{equation}
  \label{eq:33}
 \sum_{g(w)=0} \frac{\dist(w, \supp(\mu))^p}{|a-w|^p} < \infty.  
\end{equation}
But the zeros of $g$ cannot accumulate at infinity, so (\ref{eq:33}) implies (\ref{eq:Blaschke3}).  
\end{proof} 

\section*{Acknowledgments}
  
I would like to thank D. Wenzel and G. Katriel for some helpful discussions. 

\bibliography{bibliography}
\bibliographystyle{plain}

\end{document}